\newcommand{\f}[2]{\frac{#1}{#2}}
\newcommand{\mb}[1]{\mathbf{#1}}
\DeclareMathAlphabet\mathbfcal{OMS}{cmsy}{b}{n}
\renewcommand{\d}{\mathop{}\!\mathrm{d}} 
\definecolor{rev1}{HTML}{FF999A} 
\definecolor{rev2}{HTML}{F3F298} 
\definecolor{rev3}{HTML}{B2E0AE} 
\definecolor{other}{HTML}{C8C7FF} 
\theoremstyle{plain}
\newtheorem{theorem}{Theorem}[section]      
\newtheorem{lemma}[theorem]{Lemma}
\theoremstyle{definition}
\theoremstyle{remark}
\newtheorem{remark}[theorem]{Remark}
\begin{document}

\begin{frontmatter}

\title{Torus Time-Spectral Method for \\
Quasi-Periodic Problems}

\author[utk]{Sicheng He\corref{cor1}}
\ead{sicheng@utk.edu}

\author[ripon]{Hang Li}
\ead{lihang@ripon.edu}

\author[utk]{Kivanc Ekici}
\ead{ekici@utk.edu}

\cortext[cor1]{Corresponding author.}
\address[utk]{Department of Mechanical and Aerospace Engineering, University of Tennessee, Knoxville, TN 37996}
\address[ripon]{Department of Physics and Engineering, Ripon College, Ripon, WI 54971}

\begin{abstract}
Quasi-periodic trajectories with two or more incommensurate frequencies are ubiquitous in nonlinear dynamics, yet the classical Fourier-based time-spectral method is tied to strictly periodic responses.
We introduce a torus time-spectral method that lifts the governing equations to an extended angular phase space, applies double-Fourier collocation on the invariant torus, and solves  for the state.
The formulation exhibits spectral convergence for quasi-periodic problem which we give a rigorous mathematical proof and also verify numerically. 
We demonstrate the approach on Duffing oscillators and a nonlinear Klein--Gordon system, documenting spectral error decay on the torus and tight agreement with time-accurate integrations while using modest frequency grids.
The method extends naturally to higher-dimensional tori and offers a computationally efficient framework for analyzing quasi-periodic phenomena in fluid mechanics, plasma physics, celestial mechanics, and other domains where multi-frequency dynamics arise.
\end{abstract}

\begin{keyword}
Time-spectral method \sep quasi-periodic systems \sep torus dynamics \sep Fourier collocation \sep nonlinear dynamics
\end{keyword}

\end{frontmatter}

\section{Introduction}

Quasi-periodic dynamics pervade many branches of computational physics, including celestial mechanics, plasma confinement, synchronization theory, and nonlinear structural vibrations \citep{Laskar1993,Jorba1997,Pikovsky2001,Meiss2007}.
Such trajectories combine two or more incommensurate frequencies and are naturally parameterized on a torus $\mathbb{T}^2$ via $(\theta_1,\theta_2) = (\omega_0 t,\omega_1 t)$; the viewpoint generalizes to higher-dimensional tori when additional modulation mechanisms are present.
Comparable multi-tone signatures arise in nonlinear oscillators and structural systems where cubic stiffness, fluid coupling, and modal interactions generate slow envelopes riding on high-frequency carriers \citep{Nayfeh1979,Nayfeh1995}.

In aeroelastic and aerodynamic analyses, quasi-periodicity emerges when a fast structural or acoustic oscillation is modulated by a slower flow or control-loop response.
Global-instability analyses, high-fidelity simulations, and targeted experiments on swept-wing buffet all reveal a dominant oscillation modulated by low-frequency motion, producing the characteristic sidebands and spectral shoulders associated with quasi-periodicity \citep{Crouch2009,Deck2005,Dandois2018,Timme2020}.
The breadth of these applications places a premium on numerical formulations that capture multi-tone responses without abandoning the accuracy and solver infrastructure developed for periodic orbits.

Time-spectral method (TSM) and harmonic balance (HB) techniques have become standard tools for periodic simulations in computational science.
By representing $u(t)$ with a truncated Fourier basis and modeling the governing equations at collocation points, the classical TSM recasts long integrations into steady nonlinear systems that benefit from mature steady-state solvers \citep{Hall2002,Gopinath2005}.
The approach inherits the spectral accuracy of the underlying basis functions, as established in the foundational texts on Fourier and Chebyshev methods \citep{Canuto2006,Boyd2001,Trefethen2000,Shen2011,Peyret2002,EKICI2020109560}.

Within the rotor--stator interaction (RSI) community, harmonic balance analyses have been deployed to capture turbomachinery limit-cycle oscillations and nonlinear aeroelastic phenomena \citep{Ekici_Hall2007,Ekici_Hall2008,Ekici2011}.
Building on that foundation, one-shot frameworks have been extended to multi-degree-of-freedom and viscous transonic problems, enabling rapid predictions of aeroelastic dynamics for canonical airfoils and wings \citep{Li2017,Li2018d,Li2019e}.
Yet the classical TSM enforces strict periodicity, making it ill-suited for trajectories with multiple incommensurate tones.
Closing this gap requires formulations that retain the algorithmic advantages of TSM while solving directly for multi-frequency responses and their invariant manifolds.

Aeroelastic demonstrations show that the approach captures limit cycles and forced responses while remaining compatible with large-scale solvers \citep{Thomas2002}.
Complementary advances have produced coupled Newton--Krylov time-spectral solvers and adjoint formulations that target flutter prediction and time-spectral aerodynamic optimization \citep{HUANG2014481,He2018a,He2019b,He2021c}.
Recent adjoint-based strategies further extend these tools to quantify limit-cycle oscillation stability sensitivities and support active suppression mechanisms \citep{He2023a}.

Several extensions of TSM and HB methods have been developed for quasi-periodic responses.
Incremental harmonic balance algorithms in circuit theory first populated two-dimensional frequency lattices to compute almost-periodic steady states \citep{Chua1981}.
Subsequent refinements introduced multiple time scales to treat aperiodic vibrations in structural mechanics \citep{Lau1983}.
The electronics community later adapted these ideas to handle near-commensurate tones in microwave circuits \citep{Kundert1988}.
Mechanical and rotor-dynamics studies extended multi-tone balance to assess response and stability of nonlinear shafts and oscillators \citep{Kim1996,Liu2007a}.
Variable-coefficient formulations improved robustness for structural vibrations driven by several interacting tones \citep{Zhou2015}.
Continuation-based harmonic balance implementations track invariant tori across parameter sweeps and quantify their stability \citep{Guillot2017}.
Comparative studies benchmark multi- and variable-coefficient strategies on representative multi-frequency test cases \citep{Wu2023}.
Their implementation---like the broader HB family---operates entirely in the frequency domain, requiring the governing equations to be rewritten for Fourier coefficients and the linearized operators that couple them; this frequency-domain reformulation complicates direct reuse of existing time-marching residual evaluations and introduces additional bookkeeping whenever the set of active tones changes.
Recent work emphasizes efficient automatic tone selection when computing high-order quasi-periodic responses \citep{Wang2023}.
Hysteretic systems motivate additional enhancements to handle strong nonlinearities within the harmonic balance setting \citep{Balaji2024}.
Within computational fluid dynamics (CFD), supplemental-frequency harmonic balance (SF-HB) augments the base tone set to approximate broadband aerodynamic response \citep{Li2021}.
Resolvent-based analyses identify compact frequency sets that govern coherent structures in jets and similar flows \citep{Padovan2022}.

Unlike previous work reported in the literature, this study focuses on a novel \emph{Torus Time-Spectral Method (TTSM)}, which transforms the multi-frequency problem to the torus $\mathbb{T}^2$ and solves the invariance equation directly with double-Fourier collocation.
A key advantage of the proposed formulation is that it operates entirely in the \emph{time domain}: the governing equations are collocated at discrete points on the torus without requiring an explicit transformation to Fourier coefficients.
This time-domain perspective allows existing residual routines---originally developed for time-accurate or single-frequency time-spectral solvers---to be reused with minimal modification; only the spectral differentiation operator changes when moving from periodic to quasi-periodic problems.
In contrast, traditional harmonic balance implementations require a frequency-domain reformulation that couples Fourier modes through convolution sums or alternating frequency--time (AFT) procedures, complicating code maintenance and limiting modularity.

The specific contributions of this paper are twofold: first, a time-domain torus collocation formulation that enables direct reuse of existing residual routines from steady-state or single-frequency time-spectral solvers; and second, a rigorous convergence analysis together with a verification campaign, summarized in \Cref{sec:convergence,sec:results_figures}, that establishes spectral convergence on quasi-periodic ODEs and PDEs, contrasts the torus-based accuracy with the algebraic behavior of rational-period surrogates \citep{Li2021}, and highlights the significant reduction in degrees of freedom compared with supplemental-frequency harmonic balance for near-commensurate forcing.
In this paper, we focus on non-autonomous quasi-periodic systems with known frequencies.
For autonomous system, especially ones arising from the torus-attractor, the frequencies are in general unknown.
However, it is likely that such problem can be formulated with additional equations following Thomas et al.'s work that extends TSM to TSM LCO equation \cite{Thomas2002}.

The remainder of the paper is organized as follows.
\Cref{sec:periodic_vs_quasiperiodic} revisits periodic and quasi-periodic dynamics, emphasizing the lifting procedure to the torus.
\Cref{sec:classic_tsm} summarizes the classical TSM formulation, while \Cref{sec:ttsm} introduces the proposed TTSM framework. 
\Cref{sec:convergence} assesses the convergence of TTSM on representative problems, and \Cref{sec:results_figures} discusses the numerical results and the refined figure set.
Conclusions are drawn in \Cref{sec:conclusion}.

\section{Periodic and quasi-periodic dynamics}
\label{sec:periodic_vs_quasiperiodic}

The time-spectral method was developed with significant success for periodic problems in the CFD community, often yielding an order-of-magnitude speed-up compared with time-accurate integration.
Applications range from forced periodic responses to limit-cycle oscillations (LCO) \citep{Hall2002,Gopinath2005,Thomas2002}.

However, the original form of TSM cannot be used to capture quasi-periodic regimes whose Fourier spectra contain incommensurate peaks.
Simple rational approximations \citep{Li2021} introduce ``beat'' phenomena when the base frequencies are nearly commensurate, motivating the torus lifting adopted in this work.
By recognizing that quasi-periodic trajectories densely fill an $n$-torus without repetition, we reformulate the problem within a higher-dimensional periodic space.
We now revisit the lifting strategy in a compact notation that distinguishes temporal and angular variables throughout the remainder of the paper.

Consider a nonlinear dynamical system governed by
\begin{equation}
\label{eq:dynamics}
\dot{\mb{q}}(t) = \mb{f}\!\big(\mb{q}(t),t\big),
\end{equation}
where $\mb{q}\in\mathbb{R}^n$ denotes the state vector, $\mb{f}: \mathbb{R}^{n}\times\mathbb{R}\rightarrow \mathbb{R}^n$ is the forcing term, and $n$ is the dimension of the state space.
The autonomous system $\dot{\mb{q}}=\mb{f}(\mb{q})$ is a special case.

\subsection{Periodic solutions}
A solution $\mb{q}(t)$ is called \emph{periodic} with period $T>0$ if $\mb{q}(t+T) = \mb{q}(t)$ for all $t\in\mathbb{R}$.
Equivalently, the solution admits a representation
\begin{equation}
\mb{q}(t) = \mb{q}(\theta), \qquad \theta = \omega t, \quad \omega = \f{2\pi}{T},
\label{eq:periodic_def}
\end{equation}
where $\mb{q}(\theta)$ is $2\pi$-periodic in the angular variable $\theta$.
Periodic trajectories lie on a one-dimensional invariant manifold (a circle $\mathbb{T}^1$).
Differentiating \cref{eq:periodic_def} with respect to time yields the \emph{invariance equation} on $\mathbb{T}^1$:
\begin{equation}
\omega \,\frac{\d \mb{q}}{\d \theta}(\theta)
= \mb{f}\!\big(\mb{q}(\theta), \theta\big),
\label{eq:periodic_invariance}
\end{equation}
which must hold for all $\theta\in[0,2\pi)$.
Solving \cref{eq:periodic_invariance} for $\mb{q}(\theta)$ and $\omega$ simultaneously recovers the periodic orbit without time-marching, and this is the basis of the classical time-spectral method.

\subsection{Quasi-periodic solutions}
A solution $\mb{q}(t)$ is called \emph{quasi-periodic} with $m$ basic frequencies $\omega_1,\ldots,\omega_m$ if it admits a representation
\begin{equation}
\mb{q}(t) = \mb{q}(\theta_1,\ldots,\theta_m), \qquad \theta_j = \omega_j t,\quad j=1,\ldots,m,
\label{eq:quasiperiodic_def}
\end{equation}
where $\mb{q}(\boldsymbol{\theta})$ is a smooth $2\pi$-periodic function on the $m$-dimensional torus $\mathbb{T}^m=[0,2\pi)^m$, and the frequency vector $\boldsymbol{\omega}=(\omega_1,\ldots,\omega_m)^\intercal$ is \emph{rationally independent}, meaning that
\begin{equation}
\sum_{j=1}^{m} k_j \omega_j = 0
\quad\text{for integers}\quad k_j\in\mathbb{Z}
\quad\Longrightarrow\quad
k_1=\cdots=k_m=0.
\label{eq:incommensurate}
\end{equation}
When the frequencies are rationally independent, the trajectory $\{\mb{q}(t):t\in\mathbb{R}\}$ densely fills the invariant torus $\{\mb{q}(\boldsymbol{\theta}):\boldsymbol{\theta}\in\mathbb{T}^m\}$ and never repeats.
Periodic solutions are the special case of $m=1$. 

For the remainder of this work we focus on the two-frequency case ($m=2$), although the formulation extends naturally to arbitrary $m$.
Differentiating \cref{eq:quasiperiodic_def} with respect to time and applying the chain rule yields the \emph{invariance equation} on the two-torus $\mathbb{T}^2$:
\begin{equation}
\omega_1 \,\partial_{\theta_1} \mb{q}(\theta_1,\theta_2)
+ \omega_2 \,\partial_{\theta_2} \mb{q}(\theta_1,\theta_2)
= \mb{f}\!\big(\mb{q}(\theta_1,\theta_2), \theta_1,\theta_2\big),
\label{eq:torus_invariance}
\end{equation}
which must hold for all $(\theta_1,\theta_2)\in\mathbb{T}^2$.
This equation states that the vector field $\mb{f}$ evaluated on the torus must be tangent to the torus, ensuring that trajectories remain on the invariant manifold for all time.
Solving \cref{eq:torus_invariance} for $\mb{q}(\theta_1,\theta_2)$ and $(\omega_1,\omega_2)$ simultaneously recovers the quasi-periodic solution without integrating the original time-dependent system \cref{eq:dynamics}.

\paragraph{Example}
Consider the scalar differential equation
\begin{equation}
\dot{q}(t) = \sin(\omega_1 t) + \cos(\omega_2 t),
\label{eq:example_ode}
\end{equation}
with rationally independent frequencies $\omega_1,\omega_2>0$.
Writing $q(t)=q(\theta_1,\theta_2)$ with $\theta_j=\omega_j t$, the invariance equation, i.e., \cref{eq:torus_invariance}, becomes
\begin{equation}
\omega_1 \,\partial_{\theta_1} q(\theta_1,\theta_2)
+ \omega_2 \,\partial_{\theta_2} q(\theta_1,\theta_2)
= \sin(\theta_1) + \cos(\theta_2),
\label{eq:example_torus}
\end{equation}
which is a first-order partial differential equation (PDE) on $\mathbb{T}^2$ rather than an ordinary differential equation (ODE) in time.
This \emph{lifting} from the temporal variable $t$ to the angular coordinates $(\theta_1,\theta_2)$ is the foundation of the torus time-spectral method.

\section{Classic time-spectral method}
\label{sec:classic_tsm}

The classical time-spectral method leverages the spectral convergence of the Fourier transform to represent a periodic signal. The key feature of this method is the conversion of a periodic time signal to the frequency space for differentiation before casting it back to the time domain.  These operations can be efficiently performed in one step using the spectral differentiation operator $\mb{D}$ defined as follows:
\begin{equation}
\mb{D}[j, k]=
\begin{cases}
0, & j = k, \\
\dfrac{(-1)^{\,j-k}}{2}\,
\csc\!\left( \dfrac{(j-k)\pi}{n_{\text{TS}}} \right), & j \ne k,
\end{cases}.
\label{eq:time-spectral-D}
\end{equation}
For a periodic problem with a known fundamental frequency $\omega$, the classical TSM formulation discretizes the periodic orbit at $n_{\text{TS}}$ equally spaced collocation points within one period $T=2\pi/\omega$.
Let $\theta_{(i)} = 2\pi i / n_{\text{TS}}$ for $i=0,\ldots,n_{\text{TS}}-1$ denote the phase instances, and let $\mb{q}_{(i)} \in \mathbb{R}^n$ be the state vector at phase $\theta_{(i)}$.
The discrete residual at each collocation point balances the spectral phase derivative with the forcing term:
\begin{equation}
\mb{r}^{(n_{\text{TS}})}_{\mathrm{TS}}(\mb{q}^{(n_{\text{TS}})}) =
\,\omega\left(\mb{D} \otimes \mb{I}_{n}\right)\,\mb{q}^{(n_{\text{TS}})} - \mb{f}^{(n_{\text{TS}})}\left(\mb{q}^{(n_{\text{TS}})}, \boldsymbol{\theta}^{(n_{\text{TS}})}\right),
\end{equation}
where the stacked vectors are defined as
\begin{equation}
\boldsymbol{\theta}^{(n_{\text{TS}})} = \begin{bmatrix}\theta_{(0)}\\ \theta_{(1)}\\ \vdots\\ \theta_{(n_{\text{TS}}-1)}\end{bmatrix}, \quad \mb{q}^{(n_{\text{TS}})} = \begin{bmatrix}\mb{q}_{(0)}\\ \mb{q}_{(1)}\\ \vdots\\ \mb{q}_{(n_{\text{TS}}-1)}\end{bmatrix}, \quad \mb{f}^{(n_{\text{TS}})} = \begin{bmatrix}\mb{f}_{(0)}\\ \mb{f}_{(1)}\\ \vdots\\ \mb{f}_{(n_{\text{TS}}-1)}\end{bmatrix}, \quad \mb{r}^{(n_{\text{TS}})} = \begin{bmatrix}\mb{r}_{(0)}\\ \mb{r}_{(1)}\\ \vdots\\ \mb{r}_{(n_{\text{TS}}-1)}\end{bmatrix}.
\end{equation}
Here $\mb{f}_{(i)} = \mb{f}(\mb{q}_{(i)}, \theta_{(i)})$ evaluates the forcing at the $i$-th collocation point, and the tensor product $\mb{D} \otimes \mb{I}_n$ applies the spectral differentiation matrix to each component of the state vector.
Solving $\mb{r}^{(n_{\text{TS}})}_{\mathrm{TS}}=0$ yields the periodic solution at all collocation nodes simultaneously, bypassing the need for long-time integration.

When approximating a quasi-periodic problem using the classical time-spectral method, spectral convergence can be achieved locally.
However, this is not the case over a dense torus.
This limitation motivates the development of the TTSM method.

\section{Torus Time-Spectral Method (TTSM)}
\label{sec:ttsm}

\subsection{Nodal equation}
Using the torus coordinates $\theta_1 = \omega_1 t, \; \theta_2 = \omega_2 t$,
the invariance equation \eqref{eq:torus_invariance} on $\mathbb{T}^2$ becomes our starting point.
We discretize each angle over $n_{\text{TS},1}$ and $n_{\text{TS},2}$ equally-spaced points respectively:
\begin{equation}
\theta_1^{(\ell)} = \frac{2\pi \ell}{n_{\text{TS},1}}, \quad \ell=0,\dots,n_{\text{TS},1}-1,
\qquad
\theta_2^{(m)} = \frac{2\pi m}{n_{\text{TS},2}}, \quad m=0,\dots,n_{\text{TS},2}-1.
\end{equation}
At each discrete point $(\ell,m)$ in the angular space, the nodal unknowns are denoted by
$\mb{q}_{(\ell,m)} \in \mathbb{R}^n$ whereas
the Fourier spectral differentiation matrices in $\theta_1$ and $\theta_2$ are denoted by
$\mb{D}_1 \in \mathbb{R}^{n_{\text{TS},1}\times n_{\text{TS},1}}$ and
$\mb{D}_2 \in \mathbb{R}^{n_{\text{TS},2}\times n_{\text{TS},2}}$, each constructed via \cref{eq:time-spectral-D} with the corresponding grid size.
The equation at each node then becomes
\begin{equation}
\mb{r}_{\ell,m} =
\omega_1 \sum_{p=0}^{n_{\text{TS},1}-1} (\mb{D}_1)_{\ell p}\, \mb{q}_{(p,m)}
+ \omega_2 \sum_{q=0}^{n_{\text{TS},2}-1} (\mb{D}_2)_{m q}\, \mb{q}_{(\ell,q)}
- \mb{f}\!\big(\mb{q}_{(\ell,m)}, \theta_1^{(\ell)}, \theta_2^{(m)}\big)
= 0,
\label{eq:ts-collocation}
\end{equation}
which couples each nodal state to its neighbors through the spectral stencil depicted by the example of \cref{fig:ttsm_stencil}.

\subsection{Stacked equation}
For global Newton--Krylov solvers, we work with a stacked representation.
Let $N = n \times n_{\text{TS},1} \times n_{\text{TS},2}$ denote the total number of unknowns.
To construct the global differentiation operators, we extend the spectral matrices $\mb{D}_1$ and $\mb{D}_2$ to the full tensor-product grid and then to the state dimension via
\begin{equation}
{\mathcal{D}}_1 = (\mb{I}_{n_{\text{TS},2}} \otimes \mb{D}_1) \otimes \mb{I}_n \in \mathbb{R}^{N \times N},
\qquad
{\mathcal{D}}_2 = (\mb{D}_2 \otimes \mb{I}_{n_{\text{TS},1}}) \otimes \mb{I}_n \in \mathbb{R}^{N \times N}.
\label{eq:stacked-operators}
\end{equation}
The discrete residual can then be written as
\begin{equation}
\mb{r}^{(n_{\text{TS},1},n_{\text{TS},2})}_{\mathrm{TTSM}}(\mb{q}^{(n_{\text{TS},1},n_{\text{TS},2})})
= \Big( \omega_1 {\mathcal{D}}_1 + \omega_2 {\mathcal{D}}_2 \Big) \mb{q}^{(n_{\text{TS},1},n_{\text{TS},2})}
- \mb{f}^{(n_{\text{TS},1},n_{\text{TS},2})}(\mb{q}^{(n_{\text{TS},1},n_{\text{TS},2})}, \boldsymbol{\theta}^{(n_{\text{TS},1},n_{\text{TS},2})}),
\label{eq:ts-residual}
\end{equation}
where the stacked vectors are defined as
\begin{equation}
\begin{aligned}
\boldsymbol{\theta}^{(n_{\text{TS},1},n_{\text{TS},2})} &= \begin{bmatrix}\boldsymbol{\theta}_{(0,0)}\\ \boldsymbol{\theta}_{(1,0)}\\ \vdots\\ \boldsymbol{\theta}_{(n_{\text{TS},1}-1,n_{\text{TS},2}-1)}\end{bmatrix}, \quad
\mb{q}^{(n_{\text{TS},1},n_{\text{TS},2})} = \begin{bmatrix}\mb{q}_{(0,0)}\\ \mb{q}_{(1,0)}\\ \vdots\\ \mb{q}_{(n_{\text{TS},1}-1,n_{\text{TS},2}-1)}\end{bmatrix}, \\
\mb{f}^{(n_{\text{TS},1},n_{\text{TS},2})} &= \begin{bmatrix}\mb{f}_{(0,0)}\\ \mb{f}_{(1,0)}\\ \vdots\\ \mb{f}_{(n_{\text{TS},1}-1,n_{\text{TS},2}-1)}\end{bmatrix}, \quad
\mb{r}^{(n_{\text{TS},1},n_{\text{TS},2})} = \begin{bmatrix}\mb{r}_{(0,0)}\\ \mb{r}_{(1,0)}\\ \vdots\\ \mb{r}_{(n_{\text{TS},1}-1,n_{\text{TS},2}-1)}\end{bmatrix}.
\end{aligned}
\end{equation}
Here $\boldsymbol{\theta}_{(\ell,m)} = (\theta_1^{(\ell)}, \theta_2^{(m)})$ denotes the angular grid point, $\mb{f}_{(\ell,m)} = \mb{f}(\mb{q}_{(\ell,m)}, \theta_1^{(\ell)}, \theta_2^{(m)})$ is the forcing at the $(\ell,m)$-th collocation point, and the tensor products ${\mathcal{D}}_j$ apply the spectral differentiation matrices to each component of the state vector.
Solving for $\mb{r}^{(n_{\text{TS},1},n_{\text{TS},2})}_{\mathrm{TTSM}}=0$ yields the quasi-periodic solution at all collocation nodes simultaneously.

For simplicity, we define the compact form by
\begin{equation}
\mb{r}_{\mathrm{TTSM}}(\hat{\mb{q}})
= \Big( \omega_1 {\mathcal{D}}_1 + \omega_2 {\mathcal{D}}_2 \Big) \hat{\mb{q}}
- \mb{f}(\hat{\mb{q}}, \boldsymbol{\theta}),
\label{eq:ts-residual-compact}
\end{equation}
where $\hat{\mb{q}} \in \mathbb{R}^{N}$ denotes the stacked discrete nodal vector.

\begin{figure}[htbp]
\centering
\includegraphics[width=0.5\linewidth]{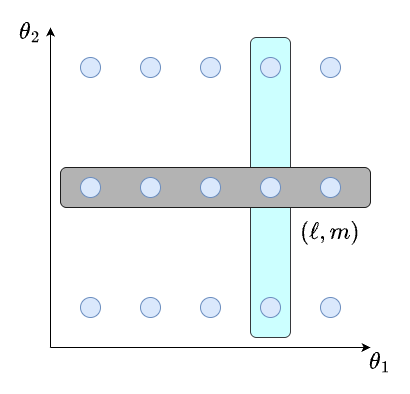}
\caption{Collocation stencil for the TTSM discretization on the two-torus $\mathbb{T}^2$ (example: $n_{\text{TS},1} = 5$ and $n_{\text{TS},2} = 3$). Each circle represents a collocation node $(\theta_1^{(\ell)}, \theta_2^{(m)})$. The residual at node $(\ell, m)$ (highlighted) depends on the solution values along the entire row (green, constant $\theta_2$) and column (yellow, constant $\theta_1$) due to the global coupling of the Fourier spectral differentiation operators $\mathcal{D}_1$ and $\mathcal{D}_2$.}
\label{fig:ttsm_stencil}
\end{figure}

\subsection{Rank deficiency}
When the governing dynamics contain a neutral direction---for example, the forced linear oscillator with no restoring term shown in \cref{fig:forced_dual_frequency}---the collocation system inherits the resulting gauge freedom: any constant shift of $\hat{\mb{q}}$ leaves \cref{eq:ts-collocation} unchanged and the discrete Jacobian loses rank.
In such cases we regularize by enforcing a \emph{nodal anchor}, replacing one row of \cref{eq:ts-collocation} with a physical constraint such as $\mb{q}_{(0,0)}=\mb{q}(0)$ or by prescribing the torus average; this replacement removes the nullspace that would otherwise render the linearized system singular.
For the nonlinear and damped examples considered later (Duffing oscillator, Klein--Gordon equation), the combination of nonlinear saturation and dissipation determines a unique amplitude, so no additional constraint is needed.

\subsection{Extension to $k$-torus systems}
The proposed framework extends naturally to quasi-periodic problems with $k$ frequency components.
Let $\boldsymbol{\theta} = (\theta_1,\ldots,\theta_k)$ with $\theta_j = \omega_j t$ parameterize the $k$-torus $\mathbb{T}^k$, and discretize each angle with $n_{\text{TS},j}$ collocation points.
The resulting residual maintains the Kronecker-sum structure
\begin{equation}
\mb{r}(\hat{\mb{q}}) = \sum_{j=1}^{k} \omega_j \mathcal{D}_j \hat{\mb{q}}
- \mb{f}(\hat{\mb{q}}, \boldsymbol{\theta}),
\end{equation}
where $\mathcal{D}_j = (\cdots \otimes \mb{I}_{M_{j+1}} \otimes \mb{D}_j \otimes \mb{I}_{M_{j-1}} \otimes \cdots) \otimes \mb{I}_n$ is the lifted differentiation matrix in direction $\theta_j$.
All ingredients of the two-frequency solver carry over: the Jacobian remains block-circulant with respect to each angular index, the preconditioner is assembled from one-dimensional TSM solves, and the convergence results in \cref{sec:convergence} generalize by replacing the bivariate interpolant with its multivariate counterpart.
Finally, padding can be applied to suppress aliasing for problems with more than two incommensurate frequencies.

\subsection{Solution method}
We solve the nonlinear residual given in \cref{eq:ts-residual} using a damped Newton--Krylov method with the base frequencies $(\omega_1,\omega_2)$ held fixed.
At iteration $k$ the torus state $\hat{\mb{q}}^{(k)}$ is updated by solving
\begin{equation}
\mb{J}^{(k)} \,\Delta\hat{\mb{q}}
= -\,\mb{r}(\hat{\mb{q}}^{(k)}),
\label{eq:nk_linear_system}
\end{equation}
where $\mb{J}^{(k)}$ is the Jacobian of the residual.
The Jacobian exhibits a block-sparse structure arising from the global coupling of the Fourier spectral stencil, as illustrated in \cref{fig:ttsm_stencil}.
As mentioned earlier, if a nodal anchor or linear phase constraint is enforced, the corresponding row replaces one equation so that \cref{eq:nk_linear_system} remains well posed.
Restarted GMRES technique \citep{Saad2003} is used to solve the linear system, and a backtracking line search on the Newton update ensures a monotonic reduction of the nonlinear residual.

\begin{figure}[htbp]
\centering
\includegraphics[width=0.5\linewidth]{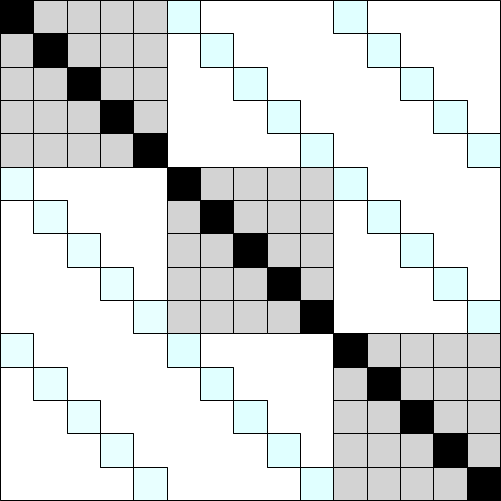}
\caption{Sparsity pattern of the Jacobian matrix $\mb{J}$ for the TTSM discretization corresponding to the $5 \times 3$ stencil in \cref{fig:ttsm_stencil}. 
The block-circulant structure arises from the global row and column coupling of the Fourier spectral operators and enables efficient preconditioning via FFT-based techniques.
Black blocks represent the spatial residual contribution, grey blocks the $\theta_1$-direction coupling, and turquoise blocks the $\theta_2$-direction coupling.}
\label{fig:block_sparse}
\end{figure}

\section{Convergence analysis}
\label{sec:convergence}

Next, we analyze the discrete residual of the TTSM.
Recall that $n$ denotes the state-space dimension and let $N = n \times n_{\text{TS},1} \times n_{\text{TS},2}$ be the total number of unknowns.
Let $\mb{q}^\star: \mathbb{T}^2 \to \mathbb{R}^{n}$ denote the exact (continuous) torus solution of \cref{eq:torus_invariance}.
We discretize on an $n_{\text{TS},1} \times n_{\text{TS},2}$ collocation grid with nodes $(\theta_1^{(\ell)}, \theta_2^{(m)})$ for $\ell = 0, \ldots, n_{\text{TS},1}-1$ and $m = 0, \ldots, n_{\text{TS},2}-1$.
The discrete state vector $\hat{\mb{q}} \in \mathbb{R}^{N}$ collects the nodal values, and the discrete residual is
\begin{equation}
\mb{r}(\hat{\mb{q}})
= (\omega_1 {\mathcal{D}}_1+\omega_2 {\mathcal{D}}_2)\,\hat{\mb{q}}
-\mb{f}(\hat{\mb{q}}, \boldsymbol{\theta}),
\end{equation}
where $\mathcal{D}_j \in \mathbb{R}^{N \times N}$ are the Fourier spectral differentiation matrices acting on the nodal values.

Recall that $\mathbb{T}^2 = [0,2\pi)^2$ denotes the two-torus.
Let $\mathbb{T}_{n_{\text{TS},1},n_{\text{TS},2}}$ denote the space of trigonometric polynomials on $\mathbb{T}^2$ with bounded wave numbers,
\begin{equation}
\mathbb{T}_{n_{\text{TS},1},n_{\text{TS},2}}
= \operatorname{span}\!\left\{e^{\mathrm{i}(k_1\theta_1+k_2\theta_2)} : |k_1|\le K_1, |k_2|\le K_2 \right\},
\qquad K_j = \lfloor (n_{\text{TS},j}-1)/2 \rfloor,
\end{equation}
Denote by $\mb{P} : C(\mathbb{T}^2; \mathbb{R}^{n}) \to \mathbb{R}^{N}$ the sampling operator that evaluates a continuous function at the collocation nodes and stacks the values into a vector.
Let $\mb{v} : \mathbb{T}^2 \to \mathbb{R}^{n}$ be the unique vector-valued trigonometric polynomial in $(\mathbb{T}_{n_{\text{TS},1},n_{\text{TS},2}})^n$ whose nodal values coincide with the samples of $\mb{q}^\star$ at the collocation grid, and write $\hat{\mb{v}} \coloneq \mb{P}\mb{q}^\star \in \mathbb{R}^{N}$ for these nodal values.

\begin{lemma}[Truncation error and consistency]
\label{lem:consistency}
Assume $\mb{q}^\star \in C^s(\mathbb{T}^2)$ with $s>1$, and that $\mb{f}(\cdot,\theta_1,\theta_2)$ is $C^1$ in $\mb{q}$ uniformly in $(\theta_1,\theta_2)$.
Then the discrete operator
\(
(\omega_1 {\mathcal{D}}_1+\omega_2 {\mathcal{D}}_2)
\)
is consistent with the continuous operator
\(
(\omega_1 \partial_{\theta_1}+\omega_2 \partial_{\theta_2})
\)
on $(\mathbb{T}_{n_{\text{TS},1},n_{\text{TS},2}})^n$, and the nodal truncation error
\begin{equation}
\mb{t}_{n_{\text{TS},1},n_{\text{TS},2}}
\coloneq (\omega_1 {\mathcal{D}}_1+\omega_2 {\mathcal{D}}_2)\,\hat{\mb{v}}
- \mb{f}(\hat{\mb{v}}, \boldsymbol{\theta}) \in \mathbb{R}^{N}
\end{equation}
satisfies
\begin{equation}
\|\mb{t}_{n_{\text{TS},1},n_{\text{TS},2}}\|_\infty
\le
C\big(n_{\text{TS},1}^{1-s}+n_{\text{TS},2}^{1-s}\big),
\qquad s>1,
\label{eq:algebraic-rate}
\end{equation}
where $C$ depends on $\|\mb{q}^\star\|_{C^s}$ and the Lipschitz constant of $\mb{f}$.
If $\mb{q}^\star$ and $\mb{f}$ are analytic in $(\theta_1,\theta_2)$ on a complex strip of half-width $\rho>0$, then
\begin{equation}
\|\mb{t}_{n_{\text{TS},1},n_{\text{TS},2}}\|_\infty
\le
C\,e^{-\rho\,\min\{n_{\text{TS},1},n_{\text{TS},2}\}},
\label{eq:spectral-rate}
\end{equation}
where $C$ depends on the analytic norm of $\mb{q}^\star$ on the strip.
\end{lemma}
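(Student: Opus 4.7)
The plan is to exploit two facts in tandem: (i) $\mb{q}^\star$ satisfies \cref{eq:torus_invariance} pointwise, and (ii) the Fourier spectral operators $\mathcal{D}_1, \mathcal{D}_2$ are exact on the trigonometric polynomial space $(\mathbb{T}_{n_{\text{TS},1},n_{\text{TS},2}})^n$. Consistency on that space is immediate from (ii). For the truncation error, first use (i) together with $\hat{\mb{v}} = \mb{P}\mb{q}^\star$ to rewrite the forcing at the grid as $\mb{f}(\hat{\mb{v}}, \boldsymbol{\theta}) = \mb{P}\!\left[(\omega_1 \partial_{\theta_1} + \omega_2 \partial_{\theta_2})\mb{q}^\star\right]$, since the PDE is satisfied at every point and in particular at every node.

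Next, bring in the bivariate trigonometric interpolant $\mb{v}$ defined in the excerpt, which satisfies $\mb{P}\mb{v} = \hat{\mb{v}}$ by construction. By the exactness of the spectral differentiation on trigonometric polynomials, $(\omega_1 \mathcal{D}_1 + \omega_2 \mathcal{D}_2)\hat{\mb{v}} = \mb{P}\!\left[(\omega_1 \partial_{\theta_1} + \omega_2 \partial_{\theta_2})\mb{v}\right]$. Subtracting the two identities produces the compact nodewise formula
\begin{equation*}
\mb{t}_{\ell,m} = \bigl[\omega_1 \partial_{\theta_1}(\mb{v}-\mb{q}^\star) + \omega_2 \partial_{\theta_2}(\mb{v}-\mb{q}^\star)\bigr]\!\bigl(\theta_1^{(\ell)}, \theta_2^{(m)}\bigr),
\end{equation*}
so that $\|\mb{t}_{n_{\text{TS},1},n_{\text{TS},2}}\|_\infty \le \omega_1\,\|\partial_{\theta_1}(\mb{v}-\mb{q}^\star)\|_{C^0(\mathbb{T}^2)} + \omega_2\,\|\partial_{\theta_2}(\mb{v}-\mb{q}^\star)\|_{C^0(\mathbb{T}^2)}$. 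This reduces the truncation problem to a pure Fourier interpolation estimate in the $C^1$ seminorm; the Lipschitz hypothesis on $\mb{f}$ is not needed for this reduction but is retained so the constant is ready for the companion stability/convergence argument.

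The final step is to close the estimate with standard bivariate Fourier interpolation bounds. For $\mb{q}^\star \in C^s(\mathbb{T}^2)$ with $s > 1$, Jackson-type approximation by trigonometric polynomials, combined with a Bernstein inequality to move $\partial_{\theta_j}$ onto the interpolation error, gives $\|\partial_{\theta_j}(\mb{v}-\mb{q}^\star)\|_{C^0} \le C\,\|\mb{q}^\star\|_{C^s}\bigl(n_{\text{TS},1}^{1-s} + n_{\text{TS},2}^{1-s}\bigr)$, which is \cref{eq:algebraic-rate}. In the analytic case, extending $\mb{q}^\star$ to a complex strip of half-width $\rho$ yields the Paley--Wiener decay $|\hat{\mb{q}}^\star_{k_1,k_2}| \le C\, e^{-\rho(|k_1|+|k_2|)}$; combining the truncation and aliasing contributions along each angular direction then gives the exponential bound \cref{eq:spectral-rate}. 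The main obstacle is precisely this bivariate interpolation estimate: in the smooth case one must combine Jackson and Bernstein inequalities across the two coordinates while tracking uniform constants (and allowing for a mild logarithmic Lebesgue-constant factor that is absorbed into $C$), and in the analytic case one must select a common strip of analyticity in both angles before bounding the aliasing sums in the bivariate Paley--Wiener class.
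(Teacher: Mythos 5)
Your proof is correct and follows the same overall plan as the paper's---reduce the truncation error to the interpolation defect $\mb{v}-\mb{q}^\star$ and then invoke standard Fourier interpolation estimates in $C^0$ and $C^1$ on $\mathbb{T}^2$. There is one genuine (and sharper) refinement in your version worth flagging: because $\hat{\mb{v}}=\mb{P}\mb{q}^\star$ and $\mb{v}$ interpolates $\mb{q}^\star$ at the collocation nodes, the forcing term $\mb{f}(\hat{\mb{v}},\boldsymbol\theta)$ coincides \emph{exactly} with $\mb{P}\bigl[\mb{f}(\mb{q}^\star,\boldsymbol\theta)\bigr]$, so the truncation error reduces entirely to
$\mb{P}\bigl[(\omega_1\partial_{\theta_1}+\omega_2\partial_{\theta_2})(\mb{v}-\mb{q}^\star)\bigr]$
with no residual $\mb{f}$-contribution. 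The paper's proof instead retains the nodal difference $\bigl[\mb{f}(\mb{v},\boldsymbol\theta)-\mb{f}(\mb{q}^\star,\boldsymbol\theta)\bigr]\big|_{\text{nodes}}$ and bounds it by $L_f\|\mb{v}-\mb{q}^\star\|_\infty$ via the mean-value theorem; that bound is technically valid (and dominated by the derivative term anyway, since $n^{-s}\le n^{1-s}$), but it is wasteful because the quantity being bounded is identically zero at the collocation points. Your observation shows the constant need not actually depend on the Lipschitz constant of $\mb{f}$, giving a slightly cleaner version of the lemma than stated; your remark that the Lipschitz hypothesis is still worth keeping for the companion stability theorem (\cref{thm:convergence}) is the right reason to retain it. The interpolation step itself (Jackson plus Bernstein in the $C^s$ case; Paley--Wiener decay of Fourier coefficients in the analytic case) is the same "standard Fourier interpolation estimate" the paper invokes, and both proofs rely on exactness of $\mathcal{D}_1,\mathcal{D}_2$ on the trigonometric polynomial space for the consistency claim.
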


\begin{proof}
The interpolant $\mb{v}$ of $\mb{q}^\star$ belongs to the trigonometric polynomial space $\mathbb{T}_{n_{\text{TS},1},n_{\text{TS},2}}$, so it admits the expansion
\begin{equation}
\mb{v}(\theta_1,\theta_2)=\sum_{|k_1|\le K_1}\sum_{|k_2|\le K_2} \hat{\mb{v}}_{k_1,k_2} e^{\mathrm{i}(k_1\theta_1+k_2\theta_2)}. 
\end{equation}
By construction, the differentiation matrices ${\mathcal{D}}_j$ reproduce the action of $\partial_{\theta_j}$ exactly on each Fourier mode in this span; therefore the discrete and continuous derivatives coincide at the collocation nodes,
\(
(\omega_1 {\mathcal{D}}_1+\omega_2 {\mathcal{D}}_2)\hat{\mb{v}}
= \bigl[(\omega_1 \partial_{\theta_1}+\omega_2 \partial_{\theta_2})\mb{v}\bigr]\big|_{\text{nodes}}.
\)
All unresolved Fourier content of $\mb{q}^\star$, therefore, enters only through the interpolation defect $\mb{v}-\mb{q}^\star$.
Evaluating the residual of the interpolant and subtracting the continuous invariance equation for $\mb{q}^\star$ yields, pointwise at the nodes,
\begin{multline}
\mb{t}_{n_{\text{TS},1},n_{\text{TS},2}}
= \bigl[(\omega_1 \partial_{\theta_1}+\omega_2 \partial_{\theta_2})(\mb{v}-\mb{q}^\star)\bigr]\big|_{\text{nodes}} \\
-\bigl[\mb{f}(\mb{v}, \boldsymbol{\theta})-\mb{f}(\mb{q}^\star, \boldsymbol{\theta})\bigr]\big|_{\text{nodes}}.
\end{multline}
Applying standard Fourier interpolation estimates on $\mathbb{T}^2$ yields
\begin{align}
\|\mb{v}-\mb{q}^\star\|_\infty
&\le C(n_{\text{TS},1}^{-s}+n_{\text{TS},2}^{-s}), \\
\|\partial_{\theta_j}(\mb{v}-\mb{q}^\star)\|_\infty
&\le C(n_{\text{TS},1}^{1-s}+n_{\text{TS},2}^{1-s}),
\end{align}
and by the mean-value theorem for $\mb{f}$,
\(
\|\mb{f}(\mb{v})-\mb{f}(\mb{q}^\star)\|
\le L_f \|\mb{v}-\mb{q}^\star\|.
\)
Combining these bounds yields \eqref{eq:algebraic-rate}; for analytic data, exponential decay of Fourier coefficients implies \eqref{eq:spectral-rate}.
\end{proof}

\begin{theorem}[Convergence of the TTSM solution for non-autonomous systems]
\label{thm:convergence}
Consider a non-autonomous system where the frequencies $(\omega_1,\omega_2)$ are prescribed.
Assume the hypotheses of Lemma~\ref{lem:consistency} and that the linearized operator $\mathcal{J}^\star \in \mathbb{R}^{N \times N}$,
\begin{equation}
\mathcal{J}^\star \coloneq
\omega_1 {\mathcal{D}}_1+\omega_2 {\mathcal{D}}_2
- \operatorname{blkdiag}\{\partial_{\mb{q}}\mb{f}(\mb{q}^\star(\theta_1,\theta_2),\theta_1,\theta_2)\},
\end{equation}
where each $\mathcal{D}_j \in \mathbb{R}^{N \times N}$ and the block-diagonal matrix has $n_{\text{TS},1} \times n_{\text{TS},2}$ blocks of size $n \times n$, satisfies a uniform inf–sup (stability) condition:
there exists $c_0>0$ such that for all grid sizes sufficiently large,
\begin{equation}
\|\mathcal{J}^\star \mb{z}\|_\infty \ge c_0 \|\mb{z}\|_\infty
\quad \text{for all } \mb{z} \in \mathbb{R}^{N}.
\end{equation}
Then for sufficiently large $(n_{\text{TS},1},n_{\text{TS},2})$ there exists a unique discrete solution
$\hat{\mb{q}}$ of $\mb{r}(\hat{\mb{q}})=0$ such that
\begin{equation}
\|\hat{\mb{q}}-\hat{\mb{v}}\|_\infty
\le
C\big(n_{\text{TS},1}^{1-s}+n_{\text{TS},2}^{1-s}\big),
\label{eq:convergence-algebraic}
\end{equation}
for $C^s$ data, or
\begin{equation}
\|\hat{\mb{q}}-\hat{\mb{v}}\|_\infty
\le
C\,e^{-\rho\,\min\{n_{\text{TS},1},n_{\text{TS},2}\}},
\label{eq:convergence-spectral}
\end{equation}
for analytic data.
\end{theorem}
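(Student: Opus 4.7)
The plan is to apply a consistency-plus-stability argument in the Newton--Kantorovich style, treating the nonlinear discrete residual as a perturbation of its linearization about the interpolant $\hat{\mb{v}}$. First I would observe that the Jacobian of $\mb{r}$ evaluated at $\hat{\mb{v}}$ coincides exactly with $\mathcal{J}^\star$: the operator $\omega_1\mathcal{D}_1+\omega_2\mathcal{D}_2$ is state-independent, and since $\mb{f}$ is evaluated nodally its Jacobian is block-diagonal with the $(\ell,m)$ block equal to $\partial_{\mb{q}}\mb{f}$ evaluated at $\mb{v}_{(\ell,m)}=\mb{q}^\star(\theta_1^{(\ell)},\theta_2^{(m)})$, which is precisely the definition used in $\mathcal{J}^\star$. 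The inf--sup hypothesis therefore upgrades to $\|(\mathcal{J}^\star)^{-1}\|_\infty \le 1/c_0$ uniformly for all sufficiently large grids.

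Next I would write $\hat{\mb{q}} = \hat{\mb{v}} + \mb{e}$ and recast $\mb{r}(\hat{\mb{q}})=0$ as the fixed-point problem
\begin{equation}
\mb{e} = \Phi(\mb{e}) \coloneq -(\mathcal{J}^\star)^{-1}\bigl[\mb{r}(\hat{\mb{v}}) + \mathcal{N}(\mb{e})\bigr],
\end{equation}
where $\mathcal{N}(\mb{e}) = -\bigl[\mb{f}(\hat{\mb{v}}+\mb{e},\boldsymbol{\theta}) - \mb{f}(\hat{\mb{v}},\boldsymbol{\theta}) - \operatorname{blkdiag}\{\partial_{\mb{q}}\mb{f}\}\mb{e}\bigr]$ is the nonlinear Taylor remainder, assembled block by block. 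Lemma~\ref{lem:consistency} furnishes the seed bound $\|\mb{r}(\hat{\mb{v}})\|_\infty = \|\mb{t}_{n_{\text{TS},1},n_{\text{TS},2}}\|_\infty \le \varepsilon$, where $\varepsilon$ is the algebraic rate from \eqref{eq:algebraic-rate} or the exponential rate from \eqref{eq:spectral-rate}. Since $\mb{f}$ is $C^1$ on a neighborhood of the compact image of $\mb{q}^\star$, I would invoke the (uniform) modulus of continuity of $\partial_{\mb{q}}\mb{f}$ to obtain, on the ball $B_\delta = \{\mb{e}:\|\mb{e}\|_\infty\le\delta\}$, estimates of the form $\|\mathcal{N}(\mb{e})\|_\infty \le \eta(\delta)\|\mb{e}\|_\infty$ and $\|\mathcal{N}(\mb{e})-\mathcal{N}(\mb{e}')\|_\infty \le \eta(\delta)\|\mb{e}-\mb{e}'\|_\infty$, with $\eta(\delta)\to 0$ as $\delta\to 0$.

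Combining these ingredients, I would apply the Banach fixed-point theorem on $B_\delta$ with $\delta = 2\varepsilon/c_0$. For grids fine enough that $\eta(\delta) \le c_0/2$, one checks the self-mapping condition $\Phi(B_\delta)\subset B_\delta$ and the contraction property with Lipschitz constant at most $1/2$; the theorem then produces a unique $\mb{e}^\star \in B_\delta$ satisfying $\|\mb{e}^\star\|_\infty \le 2\varepsilon/c_0$. Substituting the algebraic and spectral bounds from Lemma~\ref{lem:consistency} for $\varepsilon$ immediately yields \eqref{eq:convergence-algebraic} in the $C^s$ case and \eqref{eq:convergence-spectral} in the analytic case.

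The principal obstacle is less in the fixed-point mechanics than in honoring the uniformity of constants as $(n_{\text{TS},1},n_{\text{TS},2})$ grow: the inf--sup constant $c_0$ must be grid-independent, and the nonlinear modulus $\eta(\delta)$ must be controlled uniformly over nodal states confined to a fixed compact neighborhood of $\mb{q}^\star$. The latter is automatic because $\|\hat{\mb{v}}\|_\infty \le \|\mb{q}^\star\|_\infty$ and $\delta$ is chosen small, so all iterates remain in a set on which the $C^1$ regularity of $\mb{f}$ is uniform. The former is assumed outright here; a deeper treatment would tie it to spectral properties of $\omega_1\mathcal{D}_1+\omega_2\mathcal{D}_2$ together with a Diophantine condition on $(\omega_1,\omega_2)$ in the absence of dissipation, and this is the natural site for future refinement.
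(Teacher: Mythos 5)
Your proposal is correct and follows essentially the same route as the paper: consistency via Lemma~\ref{lem:consistency}, stability via the inf--sup hypothesis, and then a linearization/perturbation argument to bound $\|\hat{\mb{q}}-\hat{\mb{v}}\|_\infty$ by the truncation error. The paper compresses the final step into the phrase ``a standard linearization argument,'' and your Banach fixed-point treatment is precisely the correct unpacking of that phrase (the factor of $2/c_0$ versus $1/c_0$ simply absorbs into the constant $C$), so nothing is missing or different in substance.
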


\begin{proof}
Evaluating $\mb{r}(\hat{\mb{v}})$ gives
the truncation error $\mb{t}_{n_{\text{TS},1},n_{\text{TS},2}}$ from Lemma~\ref{lem:consistency}.
Since $\hat{\mb{v}} = \mb{P}\mb{q}^\star$, the Jacobian at $\hat{\mb{v}}$ coincides with $\mathcal{J}^\star$, which is invertible with $\|\mathcal{J}^{\star-1}\|_\infty \le 1/c_0$ by the stability assumption.
A standard linearization argument then yields
\[
\|\hat{\mb{q}} - \hat{\mb{v}}\|_\infty
\le \|\mathcal{J}^{\star-1}\|_\infty \|\mb{r}(\hat{\mb{v}})\|_\infty
\le \frac{1}{c_0}\|\mb{t}_{n_{\text{TS},1},n_{\text{TS},2}}\|_\infty,
\]
which gives \eqref{eq:convergence-algebraic}–\eqref{eq:convergence-spectral}.
\end{proof}

\begin{remark}[Stability assumption]
\label{rem:stability}
The inf--sup condition on $\mathcal{J}^\star$ is a discrete stability hypothesis that parallels the continuous non-resonance condition for the linearized flow on $\mathbb{T}^2$.
\end{remark}

\section{Numerical results}
\label{sec:results_figures}

\subsection{Forced linear oscillator}

We first study the forced oscillation with two incommensurate frequencies.
The response is simply
\begin{equation}
\dot{q} = \sin(\omega_0 t) + \cos(\omega_f t), \quad q(0) = 0.
\label{eq:examp_1}
\end{equation}
The analytic solution of this simple initial value problem is given by
\begin{equation}
q(t) = \f{1}{\omega_0} -\f{1}{\omega_0}\cos{(\omega_0 t)} + \f{1}{\omega_f}\sin{(\omega_f t)}.
\end{equation}
We solve the problem using the TTSM and the results are shown in \cref{fig:forced_dual_frequency}.
It turns out the TTSM formulation is singular for such linear forced cases meaning that one can only determine the solution $q$ up to a constant shift $c$.
To ensure we have a unique solution, the first row of the governing equation can be replaced by the initial condition, $\mb{q}_{(0, 0)} = \mb{q}(0)$.
In our case, it is simply ${q}_{(0, 0)} = 0$.

We test three representative frequency combinations that span different regimes.
The standard case ($\omega_0 = 1$, $\omega_f = \sqrt{2}$) uses frequencies of comparable magnitude whose ratio is a classical irrational number, producing a quasi-periodic trajectory that densely fills the torus.
The large frequency ratio case ($\omega_0 = 2\pi/100$, $\omega_f = 1$) tests the method's ability to handle widely separated time scales ($\omega_f/\omega_0 \approx 15.9$), where one frequency completes many cycles while the other progresses slowly.
The beat frequency case ($\omega_0 = 1$, $\omega_f = 0.97 + 0.03\sqrt{2}$) has nearly commensurate frequencies ($\omega_f/\omega_0 \approx 1.012$), producing slow amplitude modulation; this regime is particularly challenging for classical harmonic balance methods.

The resulting time-domain comparisons for these three cases are provided in \cref{fig:pressure}, \cref{fig:vorticity}, and \cref{fig:qcrit}; each panel juxtaposes the torus reconstruction with the direct integration. As can be seen, the TTSM and the time-accurate solutions -- obtained using the classical fourth-order Runge--Kutta scheme -- are in perfect agreement even though only 3 discrete points are used in the $\theta_1$ and $\theta_2$ directions, i.e., $n_{\text{TS},1} = n_{\text{TS},2} = 3$. 
\begin{figure}[htbp]
\centering
\begin{subfigure}[t]{\textwidth}
\centering
\includegraphics[width=\linewidth]{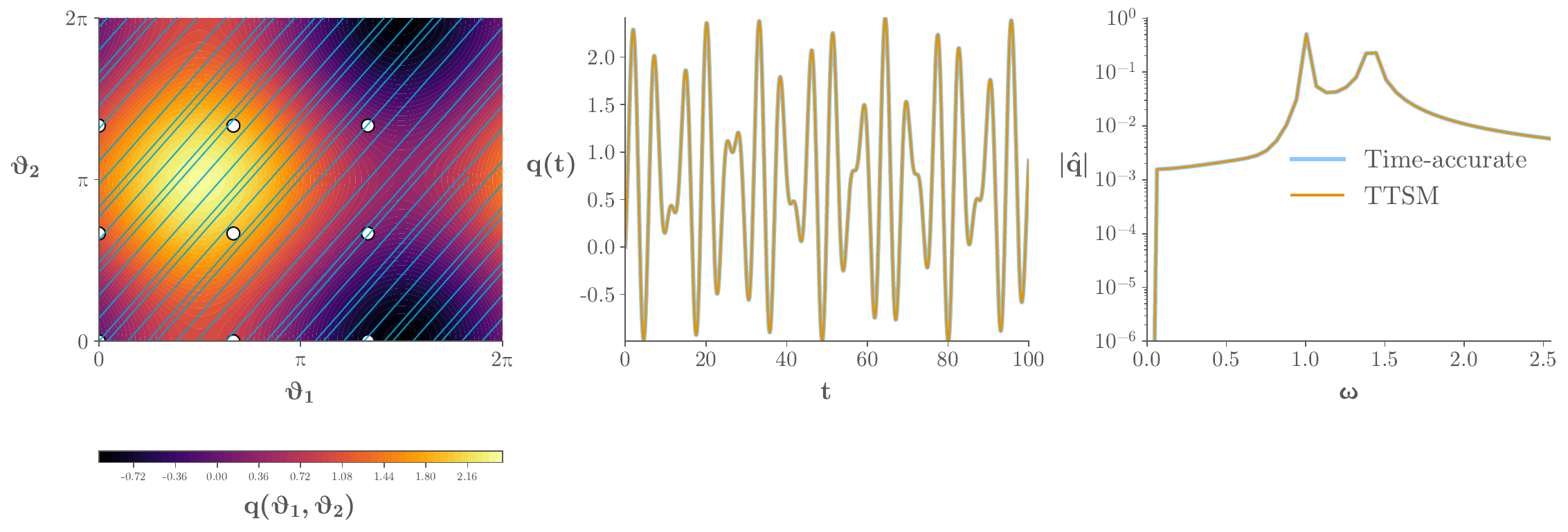}
\caption{$\omega_0=1$ and $\omega_f=\sqrt{2}$.}
\label{fig:pressure}
\end{subfigure}
\vskip1em
\begin{subfigure}[t]{\textwidth}
\centering
\includegraphics[width=\linewidth]{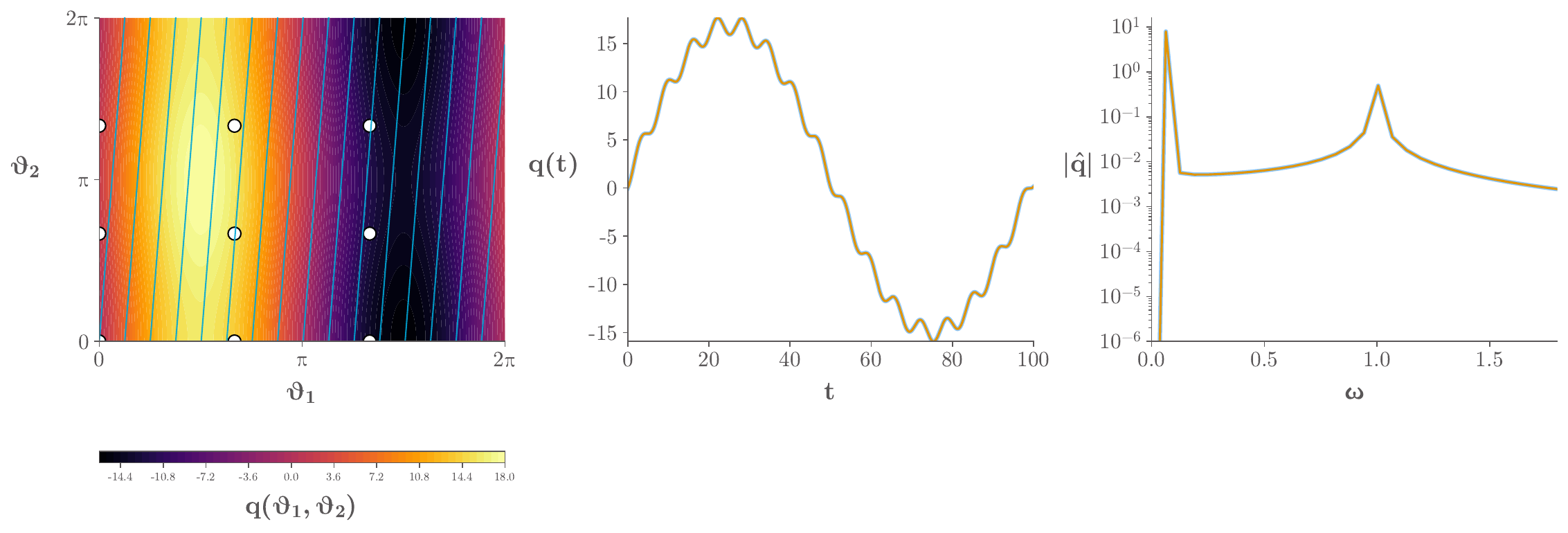}
\caption{$\omega_0=1$ and $\omega_f={2\pi}/{100}$}
\label{fig:vorticity}
\end{subfigure}
\vskip1em
\begin{subfigure}[t]{\textwidth}
\centering
\includegraphics[width=\linewidth]{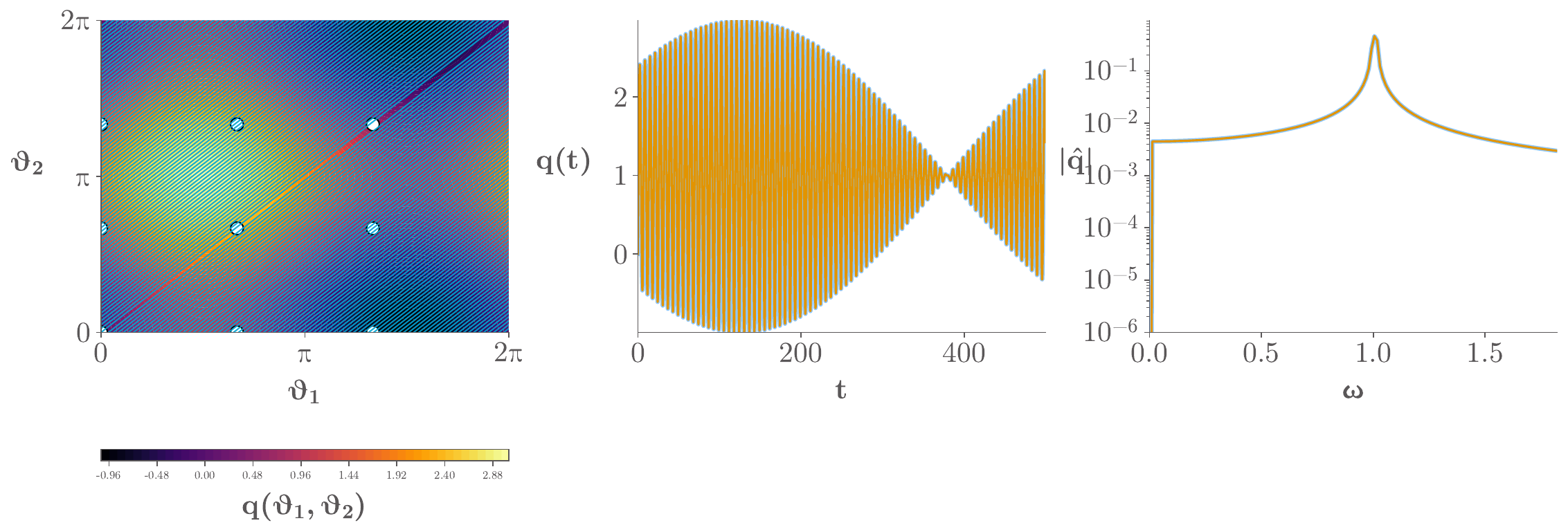}
\caption{$\omega_0=1$ and $\omega_f=0.97 + 0.03 \sqrt{2}$}
\label{fig:qcrit}
\end{subfigure}
\caption{Forced oscillation with two incommensurate frequencies (\cref{eq:examp_1}).
Each row compares the lifted torus representation with the corresponding time history; the markers indicate the initial (circle) and final (diamond) states.}
\label{fig:forced_dual_frequency}
\end{figure}


The beat frequency case in \cref{fig:qcrit} highlights a key advantage of TTSM over the SF-HB method \citep{Li2021}.
In SF-HB, a base frequency $\omega_{\text{base}}$ must satisfy $\omega_0 = n_1 \omega_{\text{base}}$ and $\omega_f \approx n_2 \omega_{\text{base}}$ for integers $n_1, n_2$.
For the beat case with $\omega_0 = 1$ and $\omega_f \approx 1.0124$, this requires $\omega_{\text{base}} \approx 0.0125$ and $n_{\text{h}} = \max(n_1, n_2) = 81$ harmonics, yielding $2 n_{\text{h}} + 1 = 163$ degrees of freedom (DOFs).
The TTSM discretizes the torus directly in $(\theta_1, \theta_2)$ space, where the required resolution depends only on solution smoothness---not the frequency ratio---so a $3 \times 3$ grid (9 DOFs) suffices for machine precision.
The primary advantage of TTSM is the $18\times$ reduction in DOFs, which grows as $\omega_f/\omega_0 \to 1$.
\Cref{fig:sfhb_comparison} illustrates this: SF-HB requires 163 collocation points over a pseudo-period $T \approx 503$, whereas TTSM uses only 9 points on the torus with equivalent accuracy.

\begin{figure}[htbp]
\centering
\includegraphics[width=\linewidth]{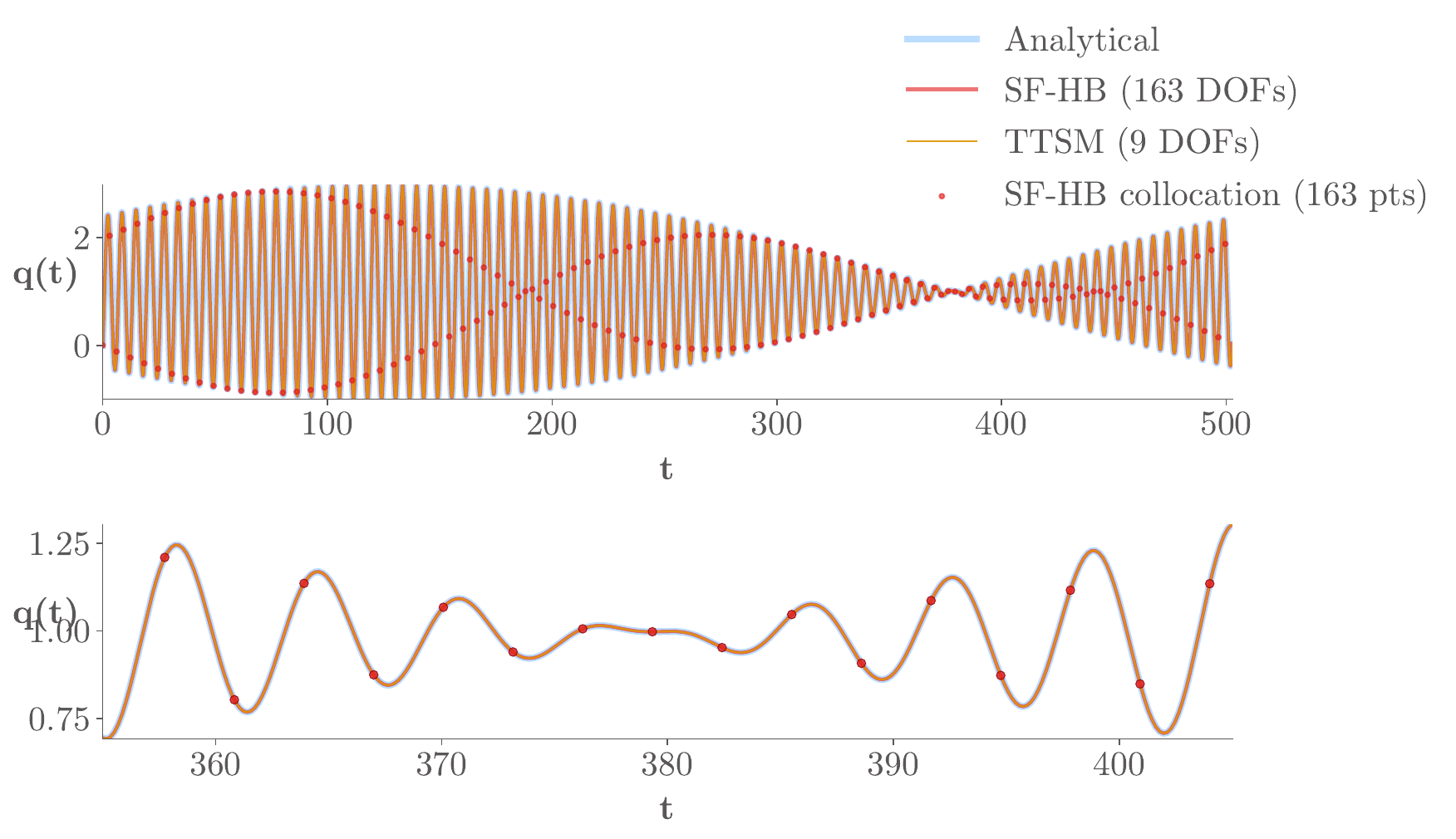}
\caption{Comparison of SF-HB and TTSM for the linear forced oscillator with beat frequencies ($\omega_0 = 1$, $\omega_f \approx 1.0124$).
Top: quasi-periodic response over one SF-HB pseudo-period ($T \approx 503$) with all 163 collocation points marked.
Bottom: zoomed view near the beat node showing the TTSM reconstruction from only 9 torus collocation points.
Both methods achieve machine-precision accuracy, but TTSM requires $18\times$ fewer degrees of freedom.}
\label{fig:sfhb_comparison}
\end{figure}

\subsection{Duffing oscillator}
\label{sec:duffing_convergence}

Having established the method on a linear problem with a closed-form reference, we now turn to a nonlinear benchmark: the two-tone Duffing oscillator.
The cubic stiffness couples the two forcing tones, generating combination frequencies (e.g., $2\omega_1 - \omega_2$) that are absent in the linear case, making this a stringent test of the TTSM framework.

The nondimensional Duffing oscillator governed by
\begin{equation}
\ddot{q} + \delta \dot{q} + \beta q + \alpha q^{3}
= f_{1}\cos(\omega_{1} t) + f_{2}\cos(\omega_{2} t),
\label{eq:duffing}
\end{equation}
with the parameter set
\begin{equation}
\delta = 0.1,~~
\beta = 1.0,~~
\alpha = 3.0,~~
f_1 = 0.05,~~
f_2 = 0.04,~~
(\omega_{1}, \omega_{2}) = \bigl(1,\sqrt{2}\,\bigr).
\end{equation}
We obtain this moderately nonlinear regime by homotopy from a weaker operating point (\,$\alpha=1$, $f_1=0.02$, $f_2=0.015$\,), which provides a robust initial guess for the TTSM solution.
Introducing the lifted state $\mb{q} = [q,\,\dot{q}]^\intercal$, \cref{eq:duffing} can be written as the invariance equation
\begin{equation}
\omega_{1}\,\partial_{\theta_{1}}\mb{q}(\theta_{1},\theta_{2})
+ \omega_{2}\,\partial_{\theta_{2}}\mb{q}(\theta_{1},\theta_{2})
=
\begin{bmatrix}
\dot{q} \\
-\delta \dot{q} - \beta q - \alpha q^{3}
+ f_1\cos\theta_{1} + f_2\cos\theta_{2}
\end{bmatrix},
\label{eq:duffing_torus_rhs}
\end{equation}
posed on the torus $(\theta_{1},\theta_{2}) \in [0,2\pi)^2$ with $\theta_{j} = \omega_{j} t$.
This system is then solved using the generic torus collocation solver described earlier.
A tensor-product grid with $n_{\text{TS},1} = n_{\text{TS},2} = 3$ suffices to resolve this moderately nonlinear response.
The nonlinear algebraic system was converged to a residual tolerance of $10^{-10}$, producing the torus state $\mb{q}(\theta_{1},\theta_{2})$ and its Fourier coefficients.

To assess the TTSM solution accuracy consistent with \cref{thm:convergence}, we compare the $3\times3$ solution to a fine-grid reference ($31\times31$) interpolated to the coarse collocation points.
The resulting solution error is $\|\hat{\mb{q}}-\hat{\mb{v}}\|_\infty = 1.6\times10^{-2}$.
For visualization, a companion time-accurate integration of \cref{eq:duffing} was performed over $t \in [0,220]$ with $1.5\times10^{4}$ samples.  The first $55$ units were discarded to remove transients; because the system is non-autonomous, the torus solution is evaluated directly at phases $\theta_j = \omega_j t$ without any alignment.
The right panel of \cref{fig:duffing_beat} shows the frequency spectra computed from the post-transient window ($t \in [55,220]$): peaks appear at the two forcing frequencies $\omega_1$ and $\omega_2$, as well as at the combination tones generated by the cubic nonlinearity.
The TTSM spectrum coincides with the time-accurate result, confirming that the coarse $3\times3$ grid captures the essential nonlinear frequency content.
The resulting three-panel overview, shown in \cref{fig:duffing_beat}, highlights the torus field, the transient comparison, and the matching frequency spectra for a representative parameter set, while the grid-refinement sweep is summarized in \cref{fig:duffing_convergence}.
\begin{figure}[htbp]
\centering
\includegraphics[width=1.0\linewidth]{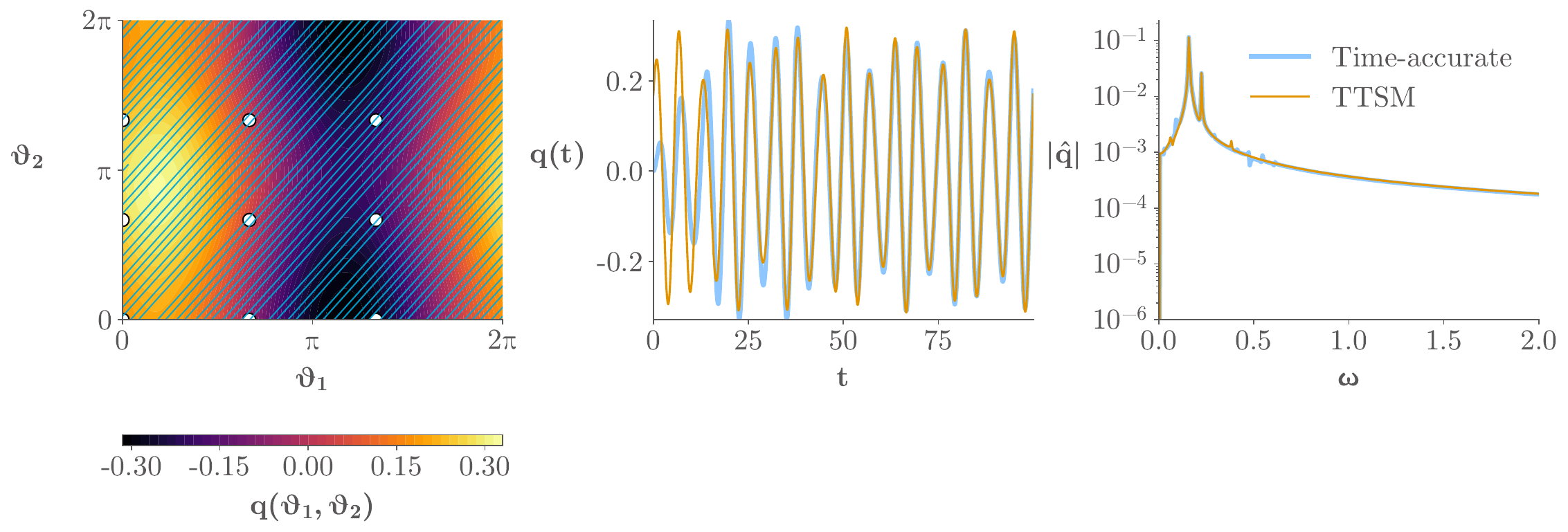}
\caption{Torus collocation of the two-tone Duffing oscillator.
Left: torus field $q(\theta_{1},\theta_{2})$ with the trajectory and collocation nodes.
Middle: comparison between the torus solution and the time-accurate integration during the transient phase ($t \in [0,100]$); the mismatch arises because the time-domain trajectory starts from zero initial conditions and must decay toward the attracting torus.
Right: frequency spectra of the two responses showing coincident peaks.}
\label{fig:duffing_beat}
\end{figure}

The middle panel of \cref{fig:duffing_beat} illustrates a key advantage of the torus formulation: the TTSM solution captures the fully developed quasi-periodic attractor directly, bypassing the transient decay that dominates the early portion of time-accurate integrations.
For forced-dissipative systems such as the Duffing oscillator, the torus solution exists as an attracting invariant manifold, and time-domain trajectories converge to it regardless of initial conditions.
The TTSM exploits this structure by solving for the invariant torus itself, eliminating the need to integrate through potentially long transients.

\begin{figure}[htbp]
\centering
\includegraphics[width=\linewidth]{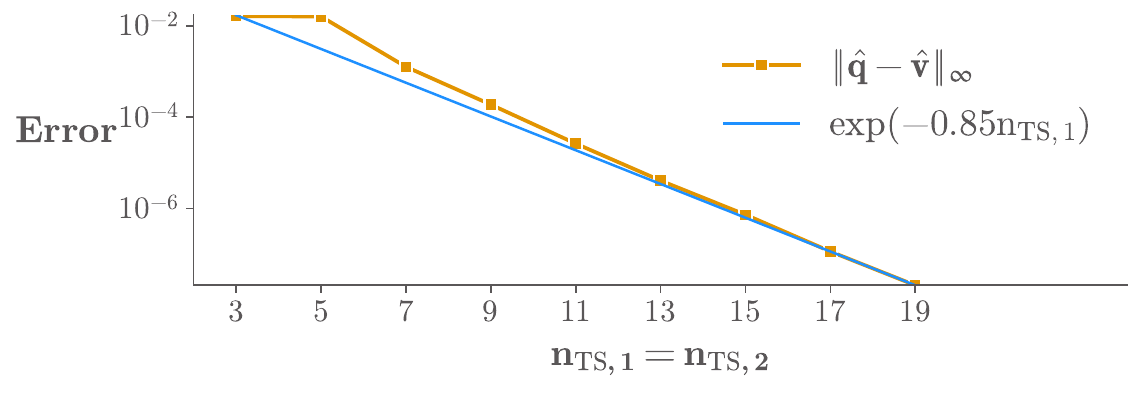}
\caption{Convergence of the Duffing oscillator solution as the tensor grid is refined ($n_{\text{TS},1}=n_{\text{TS},2}\in\{3,5,7,9,11,13,15,17,19\}$).
The solution error $\|\hat{\mb{q}} - \hat{\mb{v}}\|_\infty$ is measured relative to a fine-grid reference; the solid line shows an exponential fit to the two finest grids.}
\label{fig:duffing_convergence}
\end{figure}

\Cref{fig:duffing_convergence} confirms the spectral convergence predicted by \cref{thm:convergence}.
The solution error decays exponentially with grid size, consistent with the analyticity of the Duffing torus solution.
The fitted decay rate of approximately $0.85$ per grid point reflects the smoothness of the underlying attractor.
Even with only $3 \times 3 = 9$ collocation points, the error remains below $2\%$, demonstrating that the TTSM achieves accurate quasi-periodic solutions with modest computational effort.

\subsection{Nonlinear Klein--Gordon equation}

The model problem considered here is the nonlinear Klein--Gordon equation in one spatial dimension,
\begin{equation}
q_{tt} - q_{xx} + q + \varepsilon q^3 + \gamma q_t
= g \sin(x)\,\bigl(\cos(\omega_1 t) + \cos(\omega_2 t)\bigr),
\label{eq:kg}
\end{equation}
where $q(x,t)$ denotes the field variable.
The parameter $\gamma$ introduces damping, $\varepsilon$ controls the strength of nonlinearity, and $g$ sets the amplitude of an external spatially varying forcing.
The right-hand side includes driving terms at two incommensurate frequencies $\omega_1$ and $\omega_2$, which makes the long-time response quasi-periodic and naturally suited for a torus representation.

Introducing the velocity $v = q_t$, \cref{eq:kg} can be rewritten as a first-order system in time:
\begin{equation}
\begin{bmatrix} q_t \\ v_t \end{bmatrix}
=
\begin{bmatrix}
v \\
q_{xx} - q - \varepsilon q^3 - \gamma v + g \sin(x)\,\bigl(\cos(\omega_1 t) + \cos(\omega_2 t)\bigr)
\end{bmatrix}.
\label{eq:kg_first_order}
\end{equation}
For these computations, the spatial direction $x$ is discretized using finite differences, converting the PDE into a system of coupled ODEs.
A direct time-integration of this system produces a high fidelity time-accurate reference solution.
In parallel, the problem is reformulated in relation to the torus defined by the two forcing frequencies by introducing the angular coordinates $\theta_1 = \omega_1 t$ and $\theta_2 = \omega_2 t$.
The lifted state $\mb{q} = [q,\,v]^\intercal$ then satisfies the invariance equation
\begin{equation}
\omega_{1}\,\partial_{\theta_{1}}\mb{q}(\theta_{1},\theta_{2})
+ \omega_{2}\,\partial_{\theta_{2}}\mb{q}(\theta_{1},\theta_{2})
=
\begin{bmatrix}
v \\
q_{xx} - q - \varepsilon q^3 - \gamma v + g \sin(x)\,\bigl(\cos\theta_1 + \cos\theta_2\bigr)
\end{bmatrix},
\label{eq:kg_torus_rhs}
\end{equation}
posed on the torus $(\theta_{1},\theta_{2}) \in [0,2\pi)^2$ with $\theta_{j} = \omega_{j} t$.
In this time-spectral approach, the governing equations are enforced at collocation points on the two-dimensional torus, and the unknown field is recovered by solving a nonlinear algebraic system for its Fourier coefficients.
We use the parameter set $\omega_1 = 1.0$, $\omega_2 = \sqrt{2}$, $\gamma = 0.2$, $\varepsilon = 0.5$, and $g = 1.0$.
For visualization, a companion time-accurate integration was performed over $t \in [0,200]$ starting from zero initial conditions $(q,\dot{q})=(0,0)$.
The first half of the time record was discarded to allow transients to decay; because the system is non-autonomous, the torus solution is evaluated directly at phases $\theta_j = \omega_j t$ without any alignment.

\Cref{fig:kg_global} first highlights the space--time contour of the reference solution, whereas \cref{fig:kg_results} presents a three-row summary: each row corresponds to one probe location ($x=1.05$, $x=1.75$, $x=2.44$) and contains the torus field $q(\theta_1,\theta_2)$ with the wrapped trajectory and collocation nodes, the long-time histories of $q$, and the frequency spectrum of $q$.
In addition to these local measures, we report the solution error $\|\hat{\mb{q}} - \hat{\mb{v}}\|_\infty$ relative to a fine-grid reference, following the same methodology as the Duffing convergence study.  These results clearly show the efficacy of the novel TTSM technique for a nonlinear and {\em aperiodic} dynamic problem. 
\begin{figure}[htbp]
\centering
\includegraphics[width=0.85\linewidth]{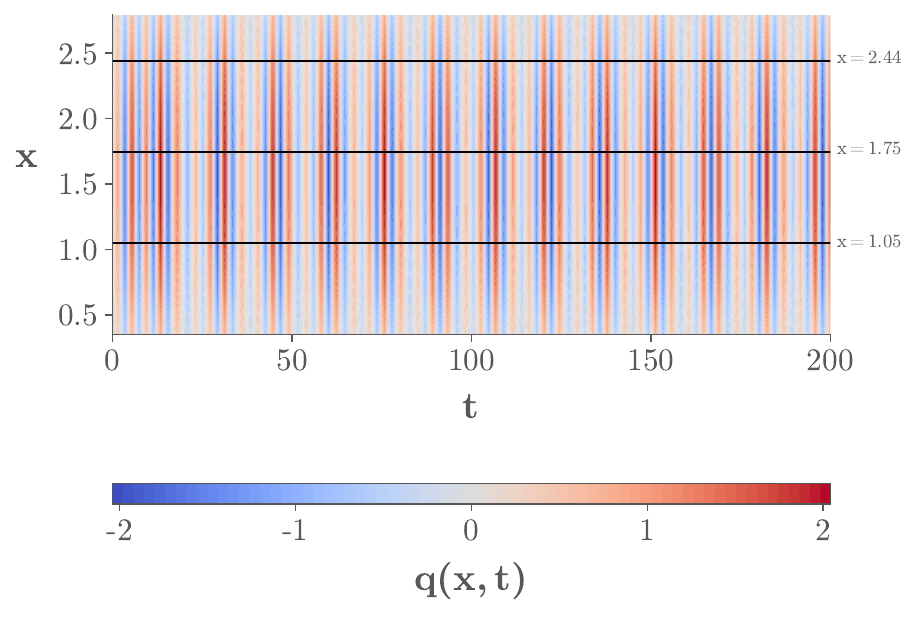}
\caption{Space--time contour of the time-accurate Klein--Gordon solution $q(x,t)$ used for validating the torus reconstruction.}
\label{fig:kg_global}
\end{figure}

\begin{figure}[htbp]
\centering
\includegraphics[width=1.0\linewidth]{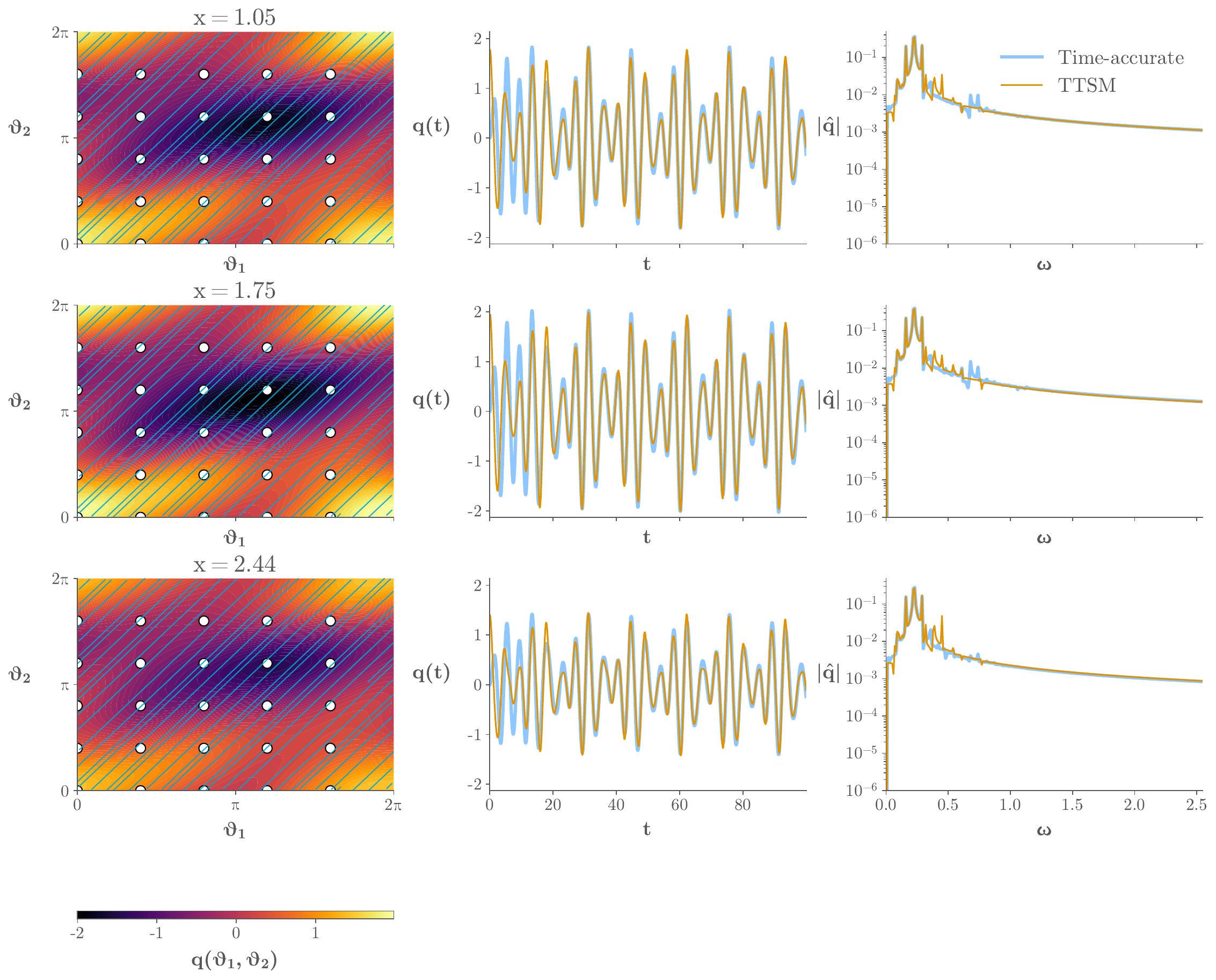}
\caption{Klein--Gordon torus solution for $n_{\text{TS},1} = n_{\text{TS},2} = 5$.
Each row corresponds to one spatial probe ($x=1.05$, $x=1.75$, $x=2.44$).
Left column: torus field $q(\theta_1,\theta_2)$ for that station together with the trajectory and collocation nodes.
Middle column: comparison between the torus reconstruction and the time-accurate integration during the transient phase ($t \in [0,100]$).
Right column: frequency spectrum of $q$ at the same station, highlighting coincident tones.}
\label{fig:kg_results}
\end{figure}

\Cref{fig:kg_results} demonstrates that a modest $5\times5$ tensor grid suffices to capture the quasi-periodic response at each spatial station.
The left column shows the torus field $q(\theta_1,\theta_2)$ exhibiting smooth variation consistent with the analytic regularity assumed in \cref{thm:convergence}.
The middle column compares the torus reconstruction (evaluated at raw phases $\theta_j = \omega_j t$ without alignment) with time-accurate integration during the transient phase ($t \in [0,100]$); the mismatch arises because the time-domain trajectory starts from zero initial conditions and must decay toward the attracting torus, while the TTSM solution represents the fully developed quasi-periodic attractor from the outset.
The right column displays the frequency spectrum, where the dominant peaks at $\omega_1=1$ and $\omega_2=\sqrt{2}$ are clearly resolved, along with combination tones arising from the cubic nonlinearity.
The spatial variation across rows reflects how the forcing profile $\sin(x)$ modulates the local amplitude, yet the underlying two-frequency structure remains consistent throughout the domain.

\Cref{fig:kg_convergence} confirms the spectral convergence predicted by \cref{thm:convergence}.
The solution error decays exponentially with grid size, consistent with the analyticity of the Klein--Gordon torus solution.
The fitted decay rate of approximately $0.55$ per grid point reflects the smoothness of the underlying quasi-periodic attractor.
Notably, the Klein--Gordon system involves $2 n_x = 16$ state variables per torus grid point (displacement and velocity at each of the $n_x = 8$ spatial nodes), yet the TTSM resolves all coupled modes with a modest $5 \times 5$ tensor grid.
This demonstrates the method's ability to handle spatially extended systems where the quasi-periodic structure arises from temporal forcing rather than spatial periodicity.
\begin{figure}[htbp]
\centering
\includegraphics[width=\linewidth]{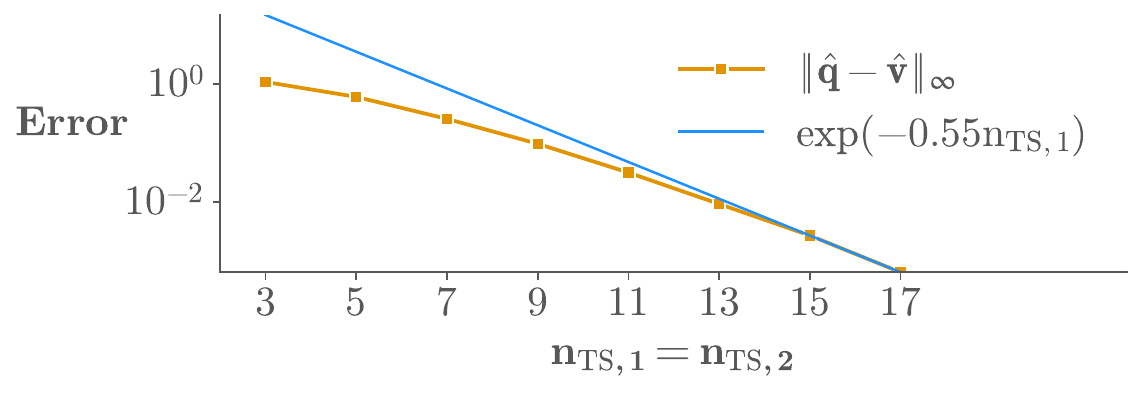}
\caption{Convergence of the nonlinear Klein--Gordon solution as the tensor grid is refined ($n_{\text{TS},1}=n_{\text{TS},2}\in\{3,5,7,9,11,13,15,17,19\}$).
The solution error $\|\hat{\mb{q}} - \hat{\mb{v}}\|_\infty$ is measured relative to a fine-grid reference; the solid line shows an exponential fit to the two finest grids.}
\label{fig:kg_convergence}
\end{figure}

\section{Conclusion}
\label{sec:conclusion}

This paper introduced the torus time-spectral method (TTSM) for quasi-periodic dynamical systems driven by two incommensurate frequencies.
By reformulating the governing equations on a two-dimensional torus $\mathbb{T}^2$ and applying double-Fourier collocation, we transform the infinite-horizon quasi-periodic problem into a finite nonlinear algebraic system that can be solved with standard Newton--Krylov techniques.

The key contributions of this work are twofold.
First, we developed a time-domain torus collocation formulation that enables direct reuse of existing residual routines from steady-state or single-frequency time-spectral solvers, and showed that the TTSM bypasses the transient decay phase inherent to time-accurate integration by directly computing the attracting invariant torus.
Second, we established a rigorous convergence analysis (\cref{thm:convergence}) guaranteeing spectral accuracy for analytic solutions, and verified through numerical experiments that modest tensor grids---as few as $3\times3$ for the Duffing oscillator and $5\times5$ for the spatially discretized Klein--Gordon equation---suffice to achieve engineering accuracy, with solution errors decaying exponentially at rates of approximately $0.55$--$0.85$ per grid point.

The method does have limitations.
The frequencies $\omega_1$ and $\omega_2$ must be known \emph{a priori}, which restricts applicability to problems where the forcing frequencies are prescribed or can be identified from preliminary analysis; autonomous LCOs, whose frequency emerges from the dynamics itself rather than from external forcing, would require an augmented formulation that solves for the unknown period alongside the torus solution.
Additionally, the non-resonance condition underlying \cref{thm:convergence} may be violated for certain frequency ratios or parameter regimes, leading to ill-conditioned Jacobians.
For problems with neutral directions (e.g., undamped linear oscillators), a nodal anchor or similar regularization is required to remove the resulting gauge freedom.




\bibliographystyle{elsarticle-num}
\bibliography{bib/references}  

\end{document}